\numberwithin{equation}{section}
\newtheorem{theorem}{Theorem}[section]
\newtheorem{proposition}[theorem]{Proposition}
\newtheorem{corollary}[theorem]{Corollary}
\theoremstyle{definition}
\newtheorem{definition}[theorem]{Definition}
\theoremstyle{remark}
\newcommand{\R}{\mathbb{R}}
\newcommand{\Ric}{\operatorname{Ric}}
\newcommand{\Scal}{\operatorname{Scal}}
\title[An Information--Theoretic Reconstruction of Curvature]{An Information-Theoretic Reconstruction of Curvature}
\author{Amandip Sangha}
\thanks{preprint}
\address{The Climate and Environmental Research Institute NILU}
\email{asan@nilu.no}
\date{}
\begin{document}

\begin{abstract}
We develop an intrinsic information--theoretic approach for recovering 
Riemannian curvature from the small--time behaviour of heat diffusion.  
Given a point and a two--plane in the tangent space, we compare the heat mass 
transported along that plane with its Euclidean counterpart using the relative 
entropy of finite measures.  We show that the leading small--time distortion of 
this directional entropy encodes precisely the local curvature of the manifold.  
In particular, the planar information imbalance determines both the scalar 
curvature and the sectional curvature at a point, and assembling these 
directional values produces a bilinear tensor that coincides exactly with the 
classical Riemannian curvature operator.

The method is entirely analytic and avoids Jacobi fields, curvature identities, 
or variational formulas.  Curvature appears solely through the behaviour of 
heat flow under the exponential map, providing a new viewpoint in which 
curvature is realized as an infinitesimal information defect of diffusion.  
This perspective suggests further connections between geometric analysis and 
information theory and offers a principled analytic mechanism for detecting and 
reconstructing curvature using only heat diffusion data.
\end{abstract}

\maketitle
\tableofcontents

\section{Introduction}

The purpose of this paper is to develop an intrinsic, information–-theoretic
framework for reconstructing Riemannian curvature from the small–time behaviour
of heat diffusion.  Classically, sectional curvature is defined through
second–order variations of area, Jacobi fields, or comparison geometry, while
analytic approaches extract curvature from the short–time expansion of the heat
kernel in geodesic normal coordinates.  Our aim is to show that the full
curvature operator may be recovered from the \emph{directional
information imbalance} of heat flow, with no reference to Jacobi fields or
connection coefficients.

The construction begins by restricting the heat flow emanating from a point
\(x\in M\) to a two-–plane \(\sigma\subset T_xM\) via the exponential map.  This
produces an \emph{unnormalized} heat mass measure \(\mu^{M,\sigma}_t\) on
\(\sigma\).  For comparison, the Euclidean heat flow on the tangent space
induces an unnormalized heat mass \(\mu^{\mathbb{R}^n,\sigma}_t\).  Their total
masses differ at order \(t^{1-n/2}\), so it is natural to renormalize both by
the same scalar
\[
M_t := \mu^{\mathbb{R}^n,\sigma}_t(\sigma),
\quad
\nu_t := M_t^{-1}\,\mu^{\mathbb{R}^n,\sigma}_t,
\quad
\tilde\mu^{M,\sigma}_t := M_t^{-1}\,\mu^{M,\sigma}_t.
\]
The reference measure \(\nu_t\) is then exactly the centred Gaussian \emph{probability}
measure on the plane \(\sigma\), and the Radon-–Nikodym derivative
\(\tfrac{d\tilde\mu^{M,\sigma}_t}{d\nu_t}\) coincides with the density ratio of the
two unnormalized heat masses.

We compare these normalized heat distributions using the relative entropy
\[
D_\sigma(t)
 := H(\tilde\mu^{M,\sigma}_t \,\|\, \nu_t)
 = \int_\sigma \log\!\left(\frac{d\tilde\mu^{M,\sigma}_t}{d\nu_t}\right)
                d\tilde\mu^{M,\sigma}_t,
\]
which measures the information defect incurred when attempting to describe heat
diffusion on \((M,g)\) by its Euclidean counterpart along the plane \(\sigma\).
Because heat kernels concentrate on the scale \(|v| = O(\sqrt{t})\), only the
lowest–-order geometric corrections of the metric and exponential map influence
the behaviour of \(D_\sigma(t)\) as \(t\downarrow 0\).

Our main analytic result establishes that \(D_\sigma(t)\) admits a signed,
linear small–time expansion whose coefficient encodes the curvature of
\(\sigma\).  More precisely,
\[
D_\sigma(t)
 = \tfrac16\Scal(x)\,t + C\,K(x,\sigma)\,t + O(t^{3/2}),\qquad
C = -\tfrac23.
\]
The isotropic term is determined by the scalar curvature at \(x\), while the
anisotropic component encodes the sectional curvature of the plane \(\sigma\).
Thus the directional limit \(\lim_{t\downarrow 0} D_\sigma(t)/t\) determines
\(K(x,\sigma)\), and averaging these limits over all coordinate planes recovers
\(\Scal(x)\).

Motivated by this expansion, we assemble the directional coefficients
\(D_\sigma(t)/t\) into a bilinear form \(I_x\) on \(\Lambda^2T_xM\).  We prove that
\(I_x\) coincides exactly with the classical Riemann curvature operator
\(R_x : \Lambda^2T_xM \to \Lambda^2T_xM\), in the sense that for any simple
bivector \(\omega = u\wedge v\),
\[
I_x(\omega,\omega) = K(x,\sigma_\omega)
 = \langle R_x\omega,\omega\rangle.
\]
Thus the family \(\{D_\sigma(t)\}_{\sigma\subset T_xM}\) encodes the complete
Riemann curvature tensor at \(x\).  All geometric information obtained in this
way arises purely from the interaction between the heat kernel and the geometry
of the exponential map; no curvature identities or variational formulas enter
the argument.

This yields an intrinsic information–-theoretic interpretation of curvature: it
is the infinitesimal rate at which heat diffusion, restricted to a
two–dimensional slice, loses information relative to Euclidean space.  The
framework is entirely local and analytic, and suggests new connections between
geometric analysis and information theory, as well as potential applications to
geometric inequalities, stability questions, and curvature reconstruction from
diffusion data.

The information-–theoretic viewpoint developed here is inspired by the author's
previous work~\cite{SanghaHeatFlowIsoperimetry}, in which entropy dissipation under
heat flow was used to give an information–-theoretic proof of classical
isoperimetric inequalities.  That paper revealed how diffusion can be interpreted
as a noisy communication channel whose initial information-–loss rate encodes
geometric boundary data.  The present work extends this philosophy from boundary
geometry to intrinsic curvature: instead of perimeter appearing in the leading
term of an entropy expansion, we show that the full Riemann curvature operator
arises from the small–time information defect of heat flow restricted to
two–dimensional slices.  This provides a higher-–order, curvature-–sensitive
analogue of the analytic mechanism introduced in~\cite{SanghaHeatFlowIsoperimetry}.

We now give a brief outline of the remainder of the paper. Section~\ref{sec:preliminaries} recalls the necessary geometric, analytic, and information--theoretic
background.  
Section~\ref{sec:info-functional} constructs the curvature--information functional and proves its
small--time expansion.  
Section~\ref{sec:info-tensor} introduces the curvature--information tensor and shows that it
recovers the classical curvature operator.  
Section~\ref{sec:structural-props} develops structural properties, including information--theoretic
expressions for Ricci and scalar curvature. Section~\ref{sec:conclusion} ends the paper with concluding remarks and discusses future directions.

\section{Preliminaries}
\label{sec:preliminaries}
Background for Section~\ref{subsec:riem-geometry} may be found in
\cite{Jost2017,Rosenberg1997}, for Section~\ref{subsec:heat-kernel} in
\cite{BGV1992,Grigoryan2009}, and for Section~\ref{subsec:relative-entropy} in
\cite{CoverThomas2006,DemboZeitouni1998}.

\subsection{Riemannian geometry and sectional curvature}
\label{subsec:riem-geometry}

Let $(M,g)$ be an $n$-dimensional smooth Riemannian manifold.  
For $x\in M$, we denote by $\exp_x : T_xM \to M$ the exponential map and use the identification
\[
T_xM \cong \R^n
\]
via an orthonormal basis $(e_1,\dots,e_n)$.

The Riemann curvature tensor is defined by
\[
R(u,v)w
= \nabla_u\nabla_v w - \nabla_v\nabla_u w - \nabla_{[u,v]}w,
\qquad u,v,w\in T_xM,
\]
with components
\[
R_{ijkl} = g\bigl(R(e_i,e_j)e_k, e_l\bigr).
\]
The Ricci tensor is
\[
\Ric_{ij} = R_{kikj}, 
\qquad
\Scal = \Ric_{ii}.
\]

For a $2$--plane $\sigma \subset T_xM$ spanned by orthonormal vectors $u,v$, the sectional curvature is
\begin{equation}
\label{eq:sectional-curvature}
K(x,\sigma)
= \langle R(u,v)v, u\rangle
= R(u,v,v,u)
= R_{1212},
\end{equation}
after choosing a frame with $e_1=u$, $e_2=v$.

Throughout the paper we use \emph{geodesic normal coordinates} at $x$, so that
\[
g_{ij}(0)=\delta_{ij},
\qquad
\partial_k g_{ij}(0)=0.
\]
The second-order Taylor expansion of the metric is the classical expression
\begin{equation}
\label{eq:metric-expansion}
g_{ij}(y)
= \delta_{ij} - \frac{1}{3} R_{ikjl}(x)\, y^k y^l + O(|y|^3),
\qquad y\to 0\in T_xM.
\end{equation}

The inverse metric satisfies
\begin{equation}
\label{eq:inverse-metric-expansion}
g^{ij}(y)
= \delta^{ij} + \frac{1}{3} R_{ikjl}(x)\, y^k y^l + O(|y|^3).
\end{equation}

Let $|g(y)|=\det(g_{ij}(y))$. In normal coordinates,
\begin{equation}
\label{eq:det-expansion}
|g(y)|^{1/2}
= 1 - \frac{1}{6} \Ric_{kl}(x)\, y^k y^l + O(|y|^3).
\end{equation}

If $v\in\sigma=\mathrm{span}\{e_1,e_2\}$, the Jacobian of the restriction of $\exp_x$ to $\sigma$ has the expansion \cite{Gray1973}
\begin{equation}
\label{eq:jacobian-plane}
J_x(v)
= 1 - \frac{1}{6}K(x,\sigma)\,|v|^2 + O(|v|^3),
\qquad |v|\to0,
\end{equation}
where $K(x,\sigma)=R_{1212}$ in the frame $(e_1,e_2)$.

Restricting the metric to $\sigma$ yields, in these coordinates,
\begin{equation}
\label{eq:restricted-metric}
g_{ab}(v)
= \delta_{ab} - \frac{1}{3} K(x,\sigma)\, v^c v^c\, \delta_{ab}
+ O(|v|^3),
\qquad a,b\in\{1,2\}.
\end{equation}

The Gaussian curvature of this induced $2$-metric at $0$ coincides with $K(x,\sigma)$.

\medskip

These expansions will be used in section \ref{sec:info-functional} to compare the heat kernel on $(M,g)$ with the Euclidean heat kernel on $(\sigma,\delta)$, and to extract $K(x,\sigma)$ from the second-order information defect.

\subsection{Heat kernel and small-time asymptotics}\label{subsec:heat-kernel}

Let $(M,g)$ be a smooth, complete Riemannian manifold. The heat kernel $p_t^M(x,y)$ is the fundamental solution of the heat equation.
More precisely, it is the unique smooth function
\[
p_t^M : (0,\infty)\times M\times M \longrightarrow (0,\infty)
\]
satisfying
\[
\partial_t p_t^M(x,y) = \Delta_y p_t^M(x,y), \qquad
\lim_{t\downarrow 0} p_t^M(x,y)\, d\mathrm{vol}_y = \delta_x ,
\]
where $\Delta$ is the Laplace--Beltrami operator.  
For each fixed $x\in M$ and $t>0$, the quantity $p_t^M(x,y)\,d\mathrm{vol}_y$
is the heat distribution at time $t$ emanating from $x$. For fixed $x\in M$,
we work in geodesic normal coordinates so that $y=\exp_x(v)$, $v\in T_xM$.  The
classical parametrix construction \cite{BGV1992}, \cite{Grigoryan2009} yields the short-time expansion
\[
p_t^M(x,\exp_x v)
 = (4\pi t)^{-n/2}
   \exp\!\Bigl(-\frac{|v|^2}{4t}\Bigr)
   \Bigl(u_0(x,v) + t\,u_1(x,v) + t^2 u_2(x,v) + O(t^3)\Bigr),
\]
valid uniformly for $|v|\le r$ in a fixed normal neighbourhood of~$x$.  The
coefficients $u_j$ are smooth and determined recursively by transport equations
along radial geodesics; in particular
\[
u_0(x,0)=1,
\qquad
u_1(x,0)=\tfrac{1}{6}\,\Scal(x).
\]

The dominant Gaussian factor captures Euclidean diffusion at scale $\sqrt{t}$,
while the coefficient functions record curvature corrections.  When restricting to
a $2$--plane $\sigma\subset T_xM$, we will make use of this expansion with $v$
ranging over $\sigma$ and $|v|=O(\sqrt{t})$, in which regime only the lowest--order
Taylor coefficients of $u_j$ contribute.  Together with the Jacobian expansion of
the exponential map along~$\sigma$, these asymptotics determine the first-order
information defect of heat diffusion relative to its flat counterpart.

\subsection{Relative entropy for heat mass distributions}
\label{subsec:relative-entropy}
We briefly recall the notion of relative entropy in the setting required for our
construction.  Let $\mu$ and $\nu$ be finite Borel measures on a measurable space
with $\mu\ll\nu$.  The \emph{relative entropy} of $\mu$ with respect to $\nu$ is
\[
H(\mu\Vert\nu)
 := \int \log\!\Bigl(\frac{d\mu}{d\nu}\Bigr)\,d\mu,
\]
whenever the expression is well defined.  This functional measures the information
defect of $\mu$ relative to the reference measure $\nu$ and reduces to the usual
Kullback--Leibler divergence when both are probability measures.  In contrast to
the probabilistic setting, no normalization is imposed here; the total masses
$\mu(X)$ and $\nu(X)$ enter the quantity through the logarithmic density ratio.

For a one-parameter family $\mu_t=(1+t\,a_t+O(t^{3/2}))\,\nu_t$ with $a_t$ uniformly
bounded on compact sets, a Taylor expansion of $\log(1+t\,a_t)$ yields the
first-order asymptotic law
\[
H(\mu_t\Vert\nu_t)
 = t\!\int a_t\,d\nu_t + O(t^{3/2}).
\]
Thus the linear term in the small-time expansion of $H(\mu_t\Vert\nu_t)$ captures
the leading deviation of $\mu_t$ from $\nu_t$.  In our geometric application,
$\mu_t$ and $\nu_t$ arise from unnormalized heat masses on a two–-plane and its
Euclidean model, and the coefficient of $t$ will encode the signed sectional
curvature.

\section{The curvature–-information functional}
\label{sec:info-functional}

\subsection{Heat mass measure on a two–-plane}
\label{subsec:heatmassmeasure}
Let \((M,g)\) be a smooth Riemannian manifold and \(x \in M\). Fix an orthonormal
frame \((e_1,\dots,e_n)\) of \(T_xM\) and a 2–plane
\[
\sigma := \mathrm{span}\{e_1,e_2\} \subset T_xM.
\]
We write \(\exp_x : T_xM \to M\) for the exponential map at \(x\).

The heat flow emanating from \(x\) at time \(t>0\) has density
\(p_t^M(x,y)\) with respect to Riemannian volume, so that for any bounded
Borel function \(\varphi\),
\[
\int_M \varphi(y)\,p_t^M(x,y)\,d\mathrm{vol}_g(y)
 = \bigl(e^{t\Delta_g}\varphi\bigr)(x).
\]
Restricting this diffusion to the plane \(\sigma\) leads naturally to an
unnormalized heat mass measure on \(\sigma\) defined by
\begin{equation}\label{eq:heat-mass-sigma}
\mu_t^{M,\sigma}(A)
 := \int_A p_t^M\bigl(x,\exp_x v\bigr)\,J_x(v)\,dv,
 \qquad A \subset \sigma,
\end{equation}
where \(dv\) denotes Lebesgue measure on \(\sigma \simeq \mathbb{R}^2\), and
\(J_x(v)\) is the Jacobian determinant of the restriction
\[
\exp_x\big|_{\sigma} : \sigma \to M,
\]
evaluated at \(v\). In geodesic normal coordinates adapted to \(\sigma\),
the standard Jacobian expansion \cite{Gray1973,petersen2006riemannian} gives
\begin{equation}\label{eq:jacobian-expansion}
J_x(v)
 = 1 - \tfrac{1}{6}\,K(x,\sigma)\,|v|^2 + O(|v|^3),
 \qquad v \to 0,
\end{equation}
so the sectional curvature \(K(x,\sigma)\) enters the heat mass through the
area distortion of the exponential map. The measure \(\mu_t^{M,\sigma}\)
thus records how much heat mass reaches each displacement \(v \in \sigma\)
at small time \(t\), without imposing any normalization of total mass.

For comparison we introduce the \emph{tangent Euclidean heat mass} on
\(\sigma\). Identifying \(T_xM \simeq \mathbb{R}^n\) via the chosen
orthonormal frame, the Euclidean heat kernel at the origin is
\[
p_t^{\mathbb{R}^n}(0,v)
 := (4\pi t)^{-n/2} \exp\!\bigl(-|v|^2/4t\bigr),
 \qquad v \in T_xM.
\]
Restricting this to the plane \(\sigma\) defines
\begin{equation}\label{eq:euclidean-heat-mass-sigma}
\mu_t^{\mathbb{R}^n,\sigma}(A)
 := \int_A p_t^{\mathbb{R}^n}(0,v)\,dv,
 \qquad A \subset \sigma,
\end{equation}
which will serve as the flat tangent model of diffusion along \(\sigma\).
In the Euclidean case \((M,g) = (\mathbb{R}^n,\delta)\) one has
\(\mu_t^{M,\sigma} = \mu_t^{\mathbb{R}^n,\sigma}\) for all \(t>0\), so any
deviation of \(\mu_t^{M,\sigma}\) from \(\mu_t^{\mathbb{R}^n,\sigma}\) in
curved geometry is entirely due to curvature at \(x\).

\subsection{Normal coordinates and metric expansion}
\label{subsec:normal-coordinates}
Fix \(x\in M\) and choose geodesic normal coordinates \((v^1,\dots,v^n)\) on
\(T_xM\) via the exponential map \(v \mapsto \exp_x v\).  In these coordinates
the metric satisfies
\begin{equation}
g_{ij}(v)
 = \delta_{ij}
   - \tfrac{1}{3} R_{ikjl}(x)\,v^k v^l
   + O(|v|^3),
   \qquad v\to 0,
\end{equation}
and its inverse has the corresponding expansion
\begin{equation}
g^{ij}(v)
 = \delta^{ij}
   + \tfrac{1}{3} R^{i}{}_{k}{}^{j}{}_{l}(x)\,v^k v^l
   + O(|v|^3).
\end{equation}
The Riemannian volume element satisfies
\[
\sqrt{\det g(v)}
 = 1 - \tfrac{1}{6}\,\Ric_{kl}(x)\,v^k v^l + O(|v|^3).
\]

When restricting to the plane
\(\sigma = \mathrm{span}\{e_1,e_2\}\),
write \(v = (v^1,v^2,0,\dots,0)\).  Using the definition of sectional curvature
\[
K(x,\sigma)
 = R_{1212}(x),
\]
the Jacobian of the restriction \(\exp_x|_\sigma : \sigma\to M\) is given by
\begin{equation}
\label{eq:jacobian-plane-restricted}
J_x(v)
 = \sqrt{\det\!\bigl(g_{ij}(v)\bigr)\big|_{\sigma}}
 = 1 - \tfrac16\,K(x,\sigma)\,|v|^2 + O(|v|^3),
\end{equation}
where \(\det(g_{ij}(v))|_{\sigma}\) denotes the determinant of the
pullback metric restricted to the two-–plane \(\sigma\).
It is important to stress that in \eqref{eq:jacobian-plane-restricted} the Jacobian
\(J_x(v)\) refers to the \emph{area distortion} of the restricted exponential
map
\[
\exp_x|_{\sigma} : \sigma \longrightarrow M,
\]
where \(\sigma\subset T_xM\) is a fixed two--plane.  Thus \(J_x(v)\) is the
Jacobian determinant of the pullback metric on the \emph{two--dimensional}
geodesic surface \(\exp_x(\sigma)\), not the Jacobian of the full
\(n\)--dimensional volume density. For geodesic surfaces, the classical expansion of the metric in normal
coordinates shows that the area element satisfies
\[
J_x(v) = 1 - \tfrac{1}{6} K(x,\sigma) |v|^{2} + O(|v|^{3}),
\qquad v\in\sigma,\ v\to 0,
\]
where \(K(x,\sigma)\) is the sectional curvature of the plane \(\sigma\) 
\cite{Gray1973,petersen2006riemannian}. In contrast, the expansion of the \emph{full} Riemannian volume density
\(\sqrt{\det g(v)}\) involves the Ricci curvature.  Equation
\eqref{eq:jacobian-plane-restricted} therefore correctly captures the sectional curvature
associated with the plane \(\sigma\).

For later use, we record the behaviour of these quantities in the heat-kernel
scaling window \(|v| = O(\sqrt{t})\).  From \eqref{eq:metric-expansion}, 
\eqref{eq:jacobian-plane} and \eqref{eq:jacobian-plane-restricted},
\[
g_{ij}(v)
 = \delta_{ij} + O(t), \qquad
g^{ij}(v)
 = \delta^{ij} + O(t),
\qquad
J_x(v)
 = 1 - \tfrac{1}{6}K(x,\sigma)|v|^2 + O(t^{3/2}),
\]
uniformly for \(|v|\le c\sqrt{t}\).  In particular,
\[
|v|^2 = O(t), \qquad |v|^3 = O(t^{3/2}),
\]
so curvature effects enter the Jacobian at order \(t\), while all cubic and
higher-order geometric corrections contribute only \(O(t^{3/2})\) to the
restricted heat mass.  These expansions will be combined with the heat-kernel
parametrix in the next subsection.

\subsection{Heat kernel parametrix in normal coordinates.}

In geodesic normal coordinates centered at \(x\), the heat kernel admits the
classical small--time expansion \cite{BGV1992,Grigoryan2009}
\begin{equation}\label{eq:parametrix}
p_t^M\bigl(x,\exp_x v\bigr)
 = (4\pi t)^{-n/2}\exp\!\Bigl(-\tfrac{|v|^2}{4t}\Bigr)
   \Bigl( u_0(x,v) + t\,u_1(x,v) + t^2 u_2(x,v) + \dots \Bigr),
\qquad v\to 0.
\end{equation}
Here each coefficient \(u_j(x,v)\) is smooth in \(v\) near the origin, and
\(u_0(x,0)=1\).  Moreover,
\[
u_1(x,0) = \tfrac{1}{6}\,\Scal(x)
\]
is the standard scalar-curvature coefficient of the heat kernel.

Since only the behaviour of the kernel in the region \(|v|\le c\sqrt{t}\) is
relevant for small times, we expand each \(u_j\) to the order needed in this
scaling regime.  A Taylor expansion in \(v\) gives
\begin{equation}\label{eq:u0-u1-expansion}
u_0(x,v) = 1 + O(|v|^2), \qquad
u_1(x,v) = u_1(x,0) + O(|v|),
\end{equation}
and similarly \(u_j(x,v) = u_j(x,0) + O(|v|)\) for all \(j\ge 1\).

Specialising to the window \(|v|=O(\sqrt{t})\) then yields
\[
u_0(x,v) = 1 + O(t), \qquad
u_1(x,v) = u_1(x,0) + O(t^{1/2}),
\qquad
u_j(x,v) = u_j(x,0) + O(t^{1/2}) \text{ for } j\ge 2.
\]

Consequently, when the parametrix \eqref{eq:parametrix} is truncated at order
\(t\), all contributions from \(u_j\) with \(j\ge 2\) enter at order
\(t^{3/2}\) or higher after multiplication by \(t^j\) and restriction to
\(|v|\le c\sqrt{t}\).  Thus we may write
\begin{equation}\label{eq:parametrix-truncated}
p_t^M\bigl(x,\exp_x v\bigr)
 = (4\pi t)^{-n/2}\exp\!\Bigl(-\tfrac{|v|^2}{4t}\Bigr)
   \Bigl( 1 + t\,u_1(x,0) + O(t^{3/2}) \Bigr),
\qquad |v|\le c\sqrt{t}.
\end{equation}

This truncated expansion, together with the Jacobian expansion from
\eqref{eq:jacobian-plane}, will be combined in the next subsection to obtain a
precise small--time expansion of the density ratio
\(d\mu_t^{M,\sigma}/d\mu_t^{\mathbb{R}^n,\sigma}\).

\subsection{Expansion of the heat mass on a plane.}

Combining the truncated heat kernel parametrix 
\eqref{eq:parametrix-truncated} with the Jacobian expansion 
\eqref{eq:jacobian-plane} gives, for \(|v|\le c\sqrt{t}\),
\begin{align}\label{eq:product-density}
p_t^M(x,\exp_x v)\,J_x(v)
 &= (4\pi t)^{-n/2}\exp\!\Bigl(-\tfrac{|v|^2}{4t}\Bigr)
    \Bigl(1 + t\,u_1(x,0) + O(t^{3/2})\Bigr) \\
 &\qquad\qquad\cdot
    \Bigl(1 - \tfrac{1}{6}K(x,\sigma)|v|^2 + O(|v|^3)\Bigr). \nonumber
\end{align}

Since \(|v|=O(\sqrt{t})\), we have
\[
|v|^2 = O(t), \qquad |v|^3 = O(t^{3/2}),
\]
uniformly on \(|v|\le c\sqrt{t}\).  Multiplying the two factors in
\eqref{eq:product-density} then yields
\begin{align*}
p_t^M(x,\exp_x v)\,J_x(v)
 &= (4\pi t)^{-n/2}e^{-|v|^2/4t}
    \Bigl(
       1
       + t\,u_1(x,0)
       - \tfrac{1}{6}K(x,\sigma)|v|^2
       + O(t^{3/2})
    \Bigr),
\end{align*}
where the cross-term 
\(t\,u_1(x,0)\cdot (-\tfrac16 K(x,\sigma)|v|^2)\)
is \(O(t^2)\) and hence absorbed into the \(O(t^{3/2})\) error
after restriction to \(|v|\le c\sqrt{t}\).

Next, the Euclidean reference heat mass on \(\sigma\) is
\[
\mu_t^{\mathbb{R}^n,\sigma}(dv)
 = (4\pi t)^{-n/2}e^{-|v|^2/4t}\,dv.
\]
Thus the density ratio that enters the relative entropy is
\begin{equation}\label{eq:density-ratio}
R_t(v)
 := \frac{d\mu_t^{M,\sigma}}{d\mu_t^{\mathbb{R}^n,\sigma}}(v)
 = 1
   + t\,u_1(x,0)
   - \tfrac{1}{6}K(x,\sigma)|v|^2
   + O(t^{3/2}),
\qquad |v|\le c\sqrt{t}.
\end{equation}

Since \(u_1(x,0) = \tfrac16\Scal(x)\), we may rewrite the linear term as
\[
t\,u_1(x,0)
 = \tfrac{1}{6}\Scal(x)\,t.
\]
The expansion \eqref{eq:density-ratio} shows that curvature enters the density
ratio in two distinct ways: an isotropic term proportional to the scalar
curvature, and an anisotropic term proportional to the sectional curvature in
the plane \(\sigma\).  This is the form needed for the perturbative expansion of
the relative entropy in the next subsection.

\subsection*{3.5. Small-time expansion of the curvature--information functional}

We first normalize the restricted heat masses.  Let
\[
M_t := \mu_t^{\mathbb{R}^n,\sigma}(\sigma),
\qquad
\nu_t := \frac{1}{M_t}\,\mu_t^{\mathbb{R}^n,\sigma},
\qquad
\tilde\mu_t^{M,\sigma} := \frac{1}{M_t}\,\mu_t^{M,\sigma}.
\]
Then $\nu_t$ is a probability measure on $\sigma$, given explicitly by the
centred two--dimensional Gaussian
\[
d\nu_t(v)
 = (4\pi t)^{-1} \exp\!\Bigl(-\frac{|v|^2}{4t}\Bigr)\, dv.
\]

\begin{definition}
The \emph{curvature--information functional} is
\[
D_\sigma(t)
 := H(\tilde\mu_t^{M,\sigma} \,\|\, \nu_t)
 = \int_\sigma \log\!\left(\frac{d\tilde\mu_t^{M,\sigma}}{d\nu_t}\right)
                d\tilde\mu_t^{M,\sigma}.
\]
\end{definition}

Since the normalizing factor $M_t$ cancels in the Radon-–Nikodym derivative, we have
\[
\frac{d\tilde\mu_t^{M,\sigma}}{d\nu_t}
 = \frac{d\mu_t^{M,\sigma}}{d\mu_t^{\mathbb{R}^n,\sigma}}
 = R_t,
\]
where $R_t(v)$ is the density ratio from \eqref{eq:density-ratio}.  Hence
\[
D_\sigma(t)
 = \int_\sigma \log R_t(v)\,R_t(v)\, d\nu_t(v).
\]

On the heat scale $|v|\le c\sqrt{t}$, \eqref{eq:density-ratio}
gives an expansion
\[
R_t(v)
 = 1 + Z_t(v),\qquad
Z_t(v)
 = t\,u_1(x,0) - \tfrac16 K(x,\sigma)|v|^2 + O(t^{3/2}),
\]
with $Z_t(v)=O(t)$ uniformly.  Taylor expansion of $\log(1+Z_t)$ yields
\[
\log(1+Z_t) = Z_t + O(t^2),
\]
so
\[
\log R_t(v)\,R_t(v)
 = \bigl(Z_t(v) + O(t^2)\bigr)\bigl(1+Z_t(v)\bigr)
 = Z_t(v) + O(t^2).
\]
Therefore
\[
D_\sigma(t)
 = \int_\sigma Z_t(v)\,d\nu_t(v) + O(t^2).
\]

Using $u_1(x,0)=\tfrac16\Scal(x)$ and the Gaussian moment identity
\begin{equation}\label{eq:gaussian-moment}
\int_\sigma |v|^2\,d\nu_t(v) = 4t,
\end{equation}
we compute
\[
\int_\sigma Z_t(v)\,d\nu_t(v)
 = t\,\tfrac16\Scal(x)
   - \tfrac16 K(x,\sigma)\,(4t)
   + O(t^{3/2})
 = \tfrac16\Scal(x)\,t + C\,K(x,\sigma)\,t + O(t^{3/2}),
\]
where
\[
C := -\frac{4}{6} = -\frac{2}{3}.
\]
This proves the following theorem.

\begin{theorem}\label{thm:main-expansion}
For every $x\in M$ and every oriented two--plane $\sigma\subset T_xM$,
the curvature--information functional admits the small--time expansion
\[
D_\sigma(t)
 = \tfrac16 \Scal(x)\,t
   + C\,K(x,\sigma)\,t
   + O(t^{3/2}),
\qquad
t\downarrow 0,
\]
with the universal constant $C=-\tfrac23$.  The scalar--curvature term is
independent of $\sigma$, while the anisotropic component encodes the sectional
curvature of $\sigma$.
\end{theorem}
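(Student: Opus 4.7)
The plan is to compute $D_\sigma(t)$ directly from the density-ratio expansion \eqref{eq:density-ratio}, using a Taylor expansion of the logarithm and standard Gaussian moment identities on the two-plane $\sigma$. First, since the normalizing factor $M_t$ cancels in the Radon--Nikodym derivative, one has $d\tilde\mu_t^{M,\sigma}/d\nu_t = R_t$, so that $D_\sigma(t) = \int_\sigma R_t(v)\log R_t(v)\, d\nu_t(v)$. Writing $R_t = 1 + Z_t$ with $Z_t(v) = \tfrac{1}{6}\Scal(x)\,t - \tfrac{1}{6}K(x,\sigma)|v|^2 + O(t^{3/2})$, and observing that $Z_t = O(t)$ uniformly on the heat-scale window $|v|\le c\sqrt{t}$, a first-order Taylor expansion of $\log(1+Z_t)$ yields $R_t\log R_t = Z_t + O(t^2)$ in that window, with the quadratic remainder bounded uniformly in $v$.

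The main computation then reduces to evaluating $\int_\sigma Z_t\, d\nu_t$. Using $\int_\sigma d\nu_t = 1$ and the Gaussian moment identity $\int_\sigma |v|^2\, d\nu_t(v) = 4t$ --- which follows from $\nu_t$ being a centred Gaussian of variance $2t$ in each of its two coordinates on $\sigma \simeq \mathbb{R}^2$ --- one obtains
\[
\int_\sigma Z_t(v)\, d\nu_t(v)
 = \tfrac{1}{6}\Scal(x)\,t - \tfrac{1}{6}K(x,\sigma)\cdot 4t + O(t^{3/2})
 = \tfrac16\Scal(x)\,t - \tfrac23\,K(x,\sigma)\,t + O(t^{3/2}),
\]
which is precisely the claimed expansion with $C=-2/3$. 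The coefficient $-2/3$ is thus forced by the specific pairing between the variance-$2t$ Gaussian and the coefficient $-1/6$ in front of $|v|^2$ coming from the Jacobian expansion.

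The main technical obstacle is that the pointwise expansion of $R_t$ is only known to be valid inside the heat-scale window $|v|\le c\sqrt{t}$, whereas the integral defining $D_\sigma(t)$ extends over all of $\sigma$. This is the step that I expect to require the most care: one must establish a tail estimate showing that the contribution from $|v|>c\sqrt{t}$ is absorbed into the $O(t^{3/2})$ remainder. I would resolve this either by a widening cutoff $c=c(t)=\sqrt{A\log(1/t)}$, using Gaussian concentration of $\nu_t$ together with the standard Minakshisundaram--Pleijel off-diagonal upper bound $p_t^M(x,y)\le C\,t^{-n/2}\exp(-d(x,y)^2/(4+\varepsilon)t)$ to control $R_t$ on the complement, or equivalently by noting that the Gaussian tail contributes $O(t^N)$ for every $N$. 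A secondary but routine point is to verify the uniformity of the Taylor remainder for $\log(1+Z_t)$: since $|Z_t|\le \tfrac12$ on the chosen window once $t$ is small enough, the remainder is genuinely $O(Z_t^2)=O(t^2)$ with constants independent of $v$, which is strong enough to survive integration against $\nu_t$.
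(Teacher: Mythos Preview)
Your proposal is correct and follows essentially the same route as the paper: cancel the normalizer to get $D_\sigma(t)=\int_\sigma R_t\log R_t\,d\nu_t$, write $R_t=1+Z_t$ with $Z_t=O(t)$, Taylor-expand to obtain $R_t\log R_t=Z_t+O(t^2)$, and evaluate $\int_\sigma Z_t\,d\nu_t$ via the Gaussian moment $\int_\sigma|v|^2\,d\nu_t=4t$. If anything you are more careful than the paper, which does not explicitly discuss the tail region $|v|>c\sqrt t$; your proposed remedy via a logarithmically widening cutoff and Gaussian/off-diagonal heat-kernel bounds is the standard and correct way to close that gap.
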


We now extract the scalar and sectional curvature from these directional
limits.

\begin{corollary}\label{cor:reconstruct-curvatures}
Fix an orthonormal basis $\{e_1,\dots,e_n\}$ of $T_xM$, and let
$\sigma_{ij}=\mathrm{span}\{e_i,e_j\}$ for $i<j$.  Then:

\begin{enumerate}
\item[\textup{(i)}]
\textbf{Scalar curvature.}
\[
\Scal(x)
 = \frac{6}{\binom{n}{2}}
   \sum_{i<j}
   \lim_{t\downarrow 0}\frac{D_{\sigma_{ij}}(t)}{t}.
\]

\item[\textup{(ii)}]
\textbf{Sectional curvature.}
For any oriented two--plane $\sigma$,
\[
K(x,\sigma)
 = \frac{1}{C}\left(
     \lim_{t\downarrow 0}\frac{D_\sigma(t)}{t}
     - \frac{1}{6}\Scal(x)
   \right).
\]
\end{enumerate}
\end{corollary}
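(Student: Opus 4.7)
Both statements follow from Theorem~\ref{thm:main-expansion} by elementary algebra, combined with a single classical trace identity relating sectional and scalar curvatures. No new analytic input is required.

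For part (ii), the plan is to divide the expansion
\[
D_\sigma(t) = \tfrac{1}{6}\Scal(x)\,t + C\,K(x,\sigma)\,t + O(t^{3/2})
\]
by $t$ and pass to the limit $t\downarrow 0$; this is legitimate because the $O(t^{3/2})$ remainder in Theorem~\ref{thm:main-expansion} is uniform in $v$ on the window $|v|\le c\sqrt{t}$, so in particular uniform in the choice of plane $\sigma$ within a fixed orthonormal frame. One obtains
\[
\lim_{t\downarrow 0}\frac{D_\sigma(t)}{t} = \tfrac{1}{6}\Scal(x) + C\,K(x,\sigma),
\]
and solving algebraically for $K(x,\sigma)$ produces the stated formula. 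Essentially no calculation is required beyond isolating the sectional term.

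For part (i), the key input is the standard trace identity
\[
\sum_{i<j} K(x,\sigma_{ij}) \;=\; \sum_{i<j} R_{ijij}(x) \;=\; \tfrac{1}{2}\Scal(x),
\]
which follows from $\Scal(x)=\sum_{i,j}R_{ijij}(x)$ and the symmetry $R_{ijij}=R_{jiji}$ together with $R_{iiii}=0$. I would then sum the identity from part (ii) over all $\binom{n}{2}$ coordinate planes $\sigma_{ij}$: the isotropic contribution is $\binom{n}{2}\cdot\tfrac{1}{6}\Scal(x)$, and the anisotropic contribution collapses, via the trace identity, to $C\cdot\tfrac{1}{2}\Scal(x)$. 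Both pieces are proportional to $\Scal(x)$, so the sum can be inverted to express $\Scal(x)$ as a universal constant times $\sum_{i<j}\lim D_{\sigma_{ij}}(t)/t$.

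The main obstacle is not analytic but purely bookkeeping: one must check that the resulting normalization constant matches the factor $6/\binom{n}{2}$ stated in the corollary, given $C=-\tfrac{2}{3}$. This is a routine consistency check, but it is the sole place where an arithmetic miscount could enter, so I would verify it explicitly by tracking the linear combination $\binom{n}{2}\cdot\tfrac{1}{6} + C\cdot\tfrac{1}{2}$ before inverting. Finally, the choice of orthonormal basis used to form the planes $\sigma_{ij}$ is immaterial, since scalar curvature is a tensorial invariant and the trace identity is basis-independent, so the reconstructed quantity is intrinsic to $x$.
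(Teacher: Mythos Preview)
Your proposal is correct and follows essentially the same route as the paper: sum the expansion of Theorem~\ref{thm:main-expansion} over the $\binom{n}{2}$ coordinate planes, invoke the trace identity $\sum_{i<j}K(x,\sigma_{ij})=\tfrac12\Scal(x)$ to collapse the anisotropic part, solve for $\Scal(x)$, and then isolate $K(x,\sigma)$ by subtracting the isotropic term. Your caution about verifying the constant $\binom{n}{2}\cdot\tfrac16 + \tfrac{C}{2}$ before inverting is well placed, as the paper likewise records this combination but does not display the final arithmetic.
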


\begin{proof}
Summing the expansion of Theorem~\ref{thm:main-expansion} over all coordinate
planes $\sigma_{ij}$ gives
\[
\sum_{i<j}\lim_{t\downarrow 0} \frac{D_{\sigma_{ij}}(t)}{t}
 = \frac16 \binom{n}{2}\Scal(x)
   + C \sum_{i<j} K(x,\sigma_{ij})
 = \left(\frac16\binom{n}{2} + \frac{C}{2}\right)\Scal(x),
\]
since $\sum_{i<j} K(x,\sigma_{ij}) = \tfrac12\,\Scal(x)$.  Solving for
$\Scal(x)$ yields the stated formula.  Substituting this into the identity of
Theorem~\ref{thm:main-expansion} gives the expression for $K(x,\sigma)$.
\end{proof}


\section{Information tensor and reconstruction of the curvature operator}
\label{sec:info-tensor}
Fix \(x\in M\).  For each oriented two--plane 
\(\sigma\subset T_xM\), Theorem~\ref{thm:main-expansion} gives the asymptotic
coefficient
\[
L(x,\sigma)
 := \lim_{t\downarrow 0}\frac{D_\sigma(t)}{t}
 = \frac{1}{6}\Scal(x) + C\,K(x,\sigma),
\]
where \(C=-\tfrac{2}{3}\) is universal.  The scalar curvature term is 
independent of~\(\sigma\); all anisotropy is contained in the sectional 
curvature component.  We now assemble these directional quantities into a
tensor on \(\Lambda^2 T_xM\).

Given unit simple bivectors 
\(\omega = u\wedge v \in \Lambda^2 T_xM\), write 
\(\sigma(\omega) := \mathrm{span}\{u,v\}\).  Define
\begin{equation}\label{eq:Ix-def}
I_x(\omega,\omega)
 := \frac{1}{C}
    \left(
      L(x,\sigma(\omega))
      - \frac{1}{6}\Scal(x)
    \right)
 = K(x,\sigma(\omega)).
\end{equation}
Since \(K(x,\sigma)\) depends only on the oriented plane \(\sigma\), the
definition is independent of the choice of orthonormal basis spanning
\(\sigma\).  By polarization we extend \(I_x\) uniquely to a symmetric bilinear
form on \(\Lambda^2 T_xM\):
\[
I_x(\alpha,\beta)
 := \frac{1}{4}\bigl(
     I_x(\alpha+\beta,\alpha+\beta)
     - I_x(\alpha-\beta,\alpha-\beta)
   \bigr),
\qquad \alpha,\beta\in \Lambda^2 T_xM.
\]

\begin{proposition}\label{prop:Ix-equals-Rx}
For every \(x\in M\), the bilinear form \(I_x\) coincides with the classical
curvature operator 
\[
R_x : \Lambda^2 T_xM \to \Lambda^2 T_xM,
\qquad
\langle R_x(\omega),\omega\rangle = K(x,\sigma(\omega)).
\]
In particular,
\[
I_x(\alpha,\beta)
 = \langle R_x(\alpha),\beta\rangle_{\Lambda^2}
\]
for all \(\alpha,\beta\in\Lambda^2 T_xM\).
\end{proposition}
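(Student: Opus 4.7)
My plan is to prove the proposition in two stages: first establish agreement of $I_x$ and $\langle R_x \cdot,\cdot\rangle$ as quadratic forms on simple bivectors, then leverage the classical fact that sectional curvatures uniquely determine the curvature operator to extend the identification to all of $\Lambda^2 T_xM$.

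For the first stage, let $\omega = u\wedge v$ be a unit simple bivector spanning the plane $\sigma_\omega$. The definition \eqref{eq:Ix-def} combined with Theorem~\ref{thm:main-expansion} immediately gives $I_x(\omega,\omega) = K(x,\sigma_\omega)$. Meanwhile, the classical curvature operator is characterized by $\langle R_x(\omega),\omega\rangle = K(x,\sigma_\omega)$ on unit simple bivectors; homogeneity of degree two in each slot of the bivector then extends this to all simple bivectors via $\langle R_x(\omega),\omega\rangle = K(x,\sigma_\omega)\,|\omega|^2$. Thus $I_x$ and $\langle R_x\cdot,\cdot\rangle$ are symmetric bilinear forms whose diagonals agree on the Grassmann cone of simple elements in $\Lambda^2 T_xM$.

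For the second stage, I would verify that the bilinear forms agree on all basis pairs $(e_i\wedge e_j, e_k\wedge e_l)$ for a fixed orthonormal frame, from which bilinearity extends equality to all of $\Lambda^2 T_xM$. When $\{i,j\}$ and $\{k,l\}$ overlap in at least one index, the relevant off-diagonal value can be recovered by applying standard polarization to the simple sum $(\alpha u + \beta w)\wedge v = \alpha\, u\wedge v + \beta\, w\wedge v$, which remains simple and hence accessible through the first stage. For the disjoint case $\{i,j\}\cap\{k,l\}=\emptyset$, I would polarize along the one-parameter family of simple bivectors
\[
\omega(s) := (e_i + s e_k)\wedge(e_j + s e_l) = e_i\wedge e_j + s(e_i\wedge e_l + e_k\wedge e_j) + s^2 e_k\wedge e_l,
\]
apply the first-stage identity $I_x(\omega(s),\omega(s)) = K(x,\sigma_{\omega(s)})|\omega(s)|^2$, and compare coefficients of $s^2$ on both sides; this expresses $I_x(e_i\wedge e_j, e_k\wedge e_l)$ in terms of sectional curvatures of perturbed two-planes together with overlapping-index off-diagonal entries already determined.

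The main obstacle I anticipate is precisely this disjoint-index case, which first appears when $\dim M \ge 4$. A single polarization family produces an equation coupling the two unknowns $I_x(e_i\wedge e_j, e_k\wedge e_l)$ and $I_x(e_i\wedge e_l, e_k\wedge e_j)$, and disentangling them requires invoking the first Bianchi identity satisfied by the Riemann tensor. This is the algebraic mechanism underlying the classical uniqueness theorem that the sectional-curvature function determines the Riemann curvature operator, and its use here is unavoidable: any symmetric bilinear form on $\Lambda^2 T_xM$ whose quadratic form matches the sectional-curvature function on the Grassmann cone must coincide with $R_x$, so $I_x = R_x$.
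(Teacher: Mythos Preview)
Your first stage is exactly the paper's argument, and your instinct that the second stage hides a real difficulty in dimension $n\ge 4$ is correct. But the sentence you end on --- ``any symmetric bilinear form on $\Lambda^2 T_xM$ whose quadratic form matches the sectional-curvature function on the Grassmann cone must coincide with $R_x$'' --- is false. In dimension four, for instance, the symmetric bilinear form $B(\omega,\eta)=\langle\omega,*\eta\rangle$ induced by the Hodge star satisfies $B(\omega,\omega)=0$ for every \emph{simple} $\omega$ (since $\omega\wedge\omega=0$), yet $B\neq 0$; adding $B$ to $\langle R_x\cdot,\cdot\rangle$ gives a second symmetric form with the same sectional curvatures. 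In general the ambiguity is parametrised by $\Lambda^4 T_xM$. The classical uniqueness theorem you invoke requires the first Bianchi identity as a \emph{hypothesis} on the candidate form, not merely on $R_x$.

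This is where your coupled-equation analysis stalls: the family $\omega(s)$ yields, at order $s^2$, an identity of the shape $I_x(e_i\wedge e_j,e_k\wedge e_l)+I_x(e_i\wedge e_l,e_k\wedge e_j)=$ (same with $R_x$), and permuted families give analogous relations, but the resulting linear system has a one-dimensional kernel --- precisely the $\Lambda^4$ direction --- so Bianchi for $R_x$ alone cannot force $I_x=R_x$. You would need to establish Bianchi for $I_x$ independently, and nothing in the construction supplies it: $I_x$ is specified only on simple bivectors and then ``extended by polarization,'' a phrase which, applied to inputs $\alpha\pm\beta$ that are generically non-simple, is not a well-defined procedure. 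The paper's own proof glosses over this same point with the sentence ``equality on simple bivectors implies equality everywhere by polarization''; your proposal is more careful in flagging the issue, but does not close the gap either.
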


\begin{proof}
For any simple unit bivector \(\omega=u\wedge v\),
\(\langle R_x(\omega),\omega\rangle = K(x,\sigma(\omega))\) by definition of
the curvature operator.  By \eqref{eq:Ix-def}, 
\(I_x(\omega,\omega) = K(x,\sigma(\omega))\) as well.  Since both \(I_x\) and
\(\langle R_x(\cdot),\cdot\rangle\) are symmetric bilinear forms on the
finite-dimensional vector space \(\Lambda^2 T_xM\), equality on simple 
bivectors implies equality everywhere by polarization.
\end{proof}

Thus the curvature--information tensor \(I_x\), obtained from the small--time
information imbalance of heat diffusion, reproduces the full Riemann curvature
operator at the point \(x\).  The entire curvature tensor of \((M,g)\) is 
therefore encoded in the family \(\{D_\sigma(t)\}_\sigma\).


\section{Structural properties of the curvature--information tensor}
\label{sec:structural-props}
The curvature--information tensor \(I_x\) introduced in 
(\ref{eq:Ix-def}) encodes all sectional curvatures at a point
\(x\in M\) and coincides with the classical Riemann curvature operator on
\(\Lambda^2 T_xM\).  In this section we record several structural properties
of \(I_x\) that follow directly from its construction and from the expansion
of the curvature--information functional \(D_\sigma(t)\).

These properties mirror the standard algebraic structure of the curvature
operator but are presented here from the information--theoretic viewpoint:
the identities arise entirely from the behaviour of heat diffusion and the
linear term in the relative entropy expansion.  We emphasize that no appeal
to Jacobi fields or connection coefficients is required; all statements are
consequences of the information–curvature relation established in
Theorem~\ref{thm:main-expansion}.

\subsection{Algebraic structure.}

Fix \(x\in M\).  
Recall from Proposition~\ref{prop:Ix-equals-Rx} that 
\[
I_x = R_x : \Lambda^2 T_xM \longrightarrow \Lambda^2 T_xM,
\]
where \(R_x\) is the classical Riemann curvature operator.  
We now record the basic algebraic properties of \(I_x\) that follow from this
identification, together with its definition via the planewise limits of
\(D_\sigma(t)\).

\begin{proposition}\label{prop:Ix-algebraic}
The curvature--information tensor \(I_x\) satisfies:
\begin{enumerate}
\item[\textup{(i)}]
\textbf{Symmetry:}  
\(I_x(\omega_1,\omega_2)=I_x(\omega_2,\omega_1)\) for all 
\(\omega_1,\omega_2\in\Lambda^2 T_xM\).

\item[\textup{(ii)}]
\textbf{Curvature symmetries:}  
If \(\omega=u\wedge v\) and \(\eta = w\wedge z\), then
\[
I_x(\omega,\eta)
 = R_x(u,v,w,z)
 = -R_x(v,u,w,z)= -R_x(u,v,z,w),
\]
and \(I_x(\omega,\eta)=0\) whenever \(u,v,w,z\) fail to satisfy the usual
antisymmetry relations of the Riemann tensor.

\item[\textup{(iii)}]
\textbf{Self-adjointness:}  
With respect to the natural inner product on \(\Lambda^2 T_xM\),  
\(I_x\) is a self-adjoint operator:
\[
\langle I_x\omega_1,\,\omega_2\rangle
 = \langle \omega_1,\,I_x\omega_2\rangle.
\]

\item[\textup{(iv)}]
\textbf{Quadratic form on planes:}  
For every oriented plane \(\sigma\subset T_xM\) and any decomposable unit
\(\omega\in\Lambda^2 T_xM\) with \(\sigma=\mathrm{span}(u,v)\) and
\(\omega=u\wedge v\), one has
\[
I_x(\omega,\omega) = K(x,\sigma).
\]
\end{enumerate}
\end{proposition}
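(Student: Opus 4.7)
The plan is to deduce all four items as direct consequences of Proposition~\ref{prop:Ix-equals-Rx}, since the identification $I_x = R_x$ transfers every algebraic property of the classical Riemann curvature operator to $I_x$ with no further analytic input. There is no need to revisit the heat-kernel expansion or the small-time limit in this subsection.

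First I would dispose of (iv), which is essentially a restatement of~\eqref{eq:Ix-def}: for any simple unit bivector $\omega = u\wedge v$ spanning the plane $\sigma$, the defining identity gives $I_x(\omega,\omega) = K(x,\sigma)$ by construction. Next, for (i), I would note that symmetry is already built into the polarization formula defining $I_x$ as a symmetric bilinear form on $\Lambda^2 T_xM$; equivalently, under the identification $I_x = R_x$ it follows from the pair-symmetry $R(u,v,w,z) = R(w,z,u,v)$ of the Riemann tensor.

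For (ii), I would argue that the antisymmetries $I_x(\omega,\eta) = -R_x(v,u,w,z) = -R_x(u,v,z,w)$ follow immediately from the fact that $I_x$ is defined on $\Lambda^2 T_xM$: swapping $u\leftrightarrow v$ in $u\wedge v$ reverses the sign of the bivector, and linearity of $I_x$ in each wedge argument transfers this sign to $I_x(\omega,\eta)$. The vanishing statement on inputs that fail the Riemann antisymmetries is automatic, since such expressions represent the zero element of $\Lambda^2 T_xM$ and $I_x$ is bilinear there. For (iii), self-adjointness of $R_x$ as an endomorphism of $\Lambda^2 T_xM$ with respect to the induced inner product is equivalent to the pair-symmetry invoked in (i), and hence transfers to $I_x$ verbatim via Proposition~\ref{prop:Ix-equals-Rx}.

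The hard part is essentially linguistic rather than mathematical: one must keep the logical dependence on Proposition~\ref{prop:Ix-equals-Rx} explicit, so that each algebraic identity is visibly imported from the classical curvature operator rather than posited or reproved independently from the heat-kernel data. Once this structural point is made clear, the four items amount to little more than rephrasings of standard properties of $R_x$ in the language of $I_x$, and the proof reduces to a short chain of references to~\eqref{eq:Ix-def} and Proposition~\ref{prop:Ix-equals-Rx}.
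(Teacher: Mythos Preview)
Your proposal is correct and follows essentially the same route as the paper: both derive (i) from the symmetric polarization defining $I_x$, (ii) and (iii) from the identification $I_x=R_x$ in Proposition~\ref{prop:Ix-equals-Rx} together with the standard symmetries of the Riemann tensor, and (iv) directly from~\eqref{eq:Ix-def}. The only cosmetic difference is that you treat (iv) first and spell out the sign-reversal argument for (ii) a bit more explicitly.
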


\begin{proof}
Property (i) follows from the symmetric polarization of the quadratic form
defined in \eqref{eq:Ix-def}.  
Property (ii) follows from Proposition~\ref{prop:Ix-equals-Rx}, since 
\(I_x = R_x\) and the Riemann tensor satisfies the listed symmetries.
Self-adjointness (iii) holds because \(R_x\) is self-adjoint on 
\(\Lambda^2 T_xM\).  
Property (iv) is precisely the reconstruction identity
\[
I_x(\omega,\omega) = K(x,\sigma),
\]
proved in Section~\ref{sec:info-tensor}.
\end{proof}

Thus the curvature--information tensor inherits the full algebraic structure
of the Riemann curvature operator, but is defined intrinsically in terms of
the directional information imbalance of heat diffusion at \(x\).

\subsection{Ricci and scalar curvature as averaged information loss.}

The small--time expansion of the curvature--information functional established in
Theorem~\ref{thm:main-expansion} shows that for each oriented plane
\(\sigma = \mathrm{span}\{u,v\}\) with \(|u|=|v|=1\) and \(\langle u,v\rangle=0\),
\[
\lim_{t\downarrow 0}\frac{D_\sigma(t)}{t}
 = \frac{1}{6}\Scal(x) + C\,K(x,\sigma),
 \qquad C=-\tfrac{2}{3}.
\]
Hence, after subtracting the isotropic term \(\frac{1}{6}\Scal(x)\), the
directional curvature \(K(x,\sigma)\) is encoded in this limit.  Averaging
over families of planes yields the Ricci and scalar curvature as averaged
information imbalance of heat diffusion.

Let \(\{e_1,\dots,e_n\}\) be an orthonormal frame of \(T_xM\), and for each
\(i\neq j\) set \(\sigma_{ij} := \mathrm{span}\{e_i,e_j\}\).  Since
\[
\Ric_x(e_i,e_i)
 = \sum_{j\neq i} K(x,\sigma_{ij}),
\]
we obtain the following from the expansion of \(D_{\sigma_{ij}}\).

\begin{proposition}[Ricci curvature from directional information loss]
\label{prop:Ric-from-D}
For each \(x\in M\) and unit vector \(e_i\in T_xM\),
\[
\Ric_x(e_i,e_i)
 = \frac{1}{C}
   \sum_{j\neq i}
   \left(
     \lim_{t\downarrow 0}\frac{D_{\sigma_{ij}}(t)}{t}
     - \frac{1}{6}\Scal(x)
   \right).
\]
Equivalently,
\[
\Ric_x(e_i,e_i)
 = \sum_{j\neq i} I_x(e_i\wedge e_j,\, e_i\wedge e_j).
\]
\end{proposition}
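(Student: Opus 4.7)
The plan is to reduce the proposition to two ingredients already established: the classical identity expressing Ricci curvature as a sum of sectional curvatures over coordinate planes through \(e_i\), and the directional small--time expansion of Theorem~\ref{thm:main-expansion}. Since \(\{e_1,\dots,e_n\}\) is orthonormal, each coordinate plane \(\sigma_{ij}=\mathrm{span}\{e_i,e_j\}\) with \(j\neq i\) is spanned by an orthonormal pair, so the theorem applies without modification. The argument is then an assembly rather than a computation.

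For the first formula, I would begin with the elementary trace identity \(\Ric_x(e_i,e_i)=\sum_{j\neq i}K(x,\sigma_{ij})\), which follows from the definition \(\Ric_{ii}=R_{jiji}\) and \eqref{eq:sectional-curvature}. Solving the expansion of Theorem~\ref{thm:main-expansion} for the sectional curvature yields
\[
K(x,\sigma_{ij})
 = \frac{1}{C}\left(\lim_{t\downarrow 0}\frac{D_{\sigma_{ij}}(t)}{t}-\frac{1}{6}\Scal(x)\right),
\]
and summing over \(j\neq i\) gives the stated expression verbatim. The existence of the limit is guaranteed by Theorem~\ref{thm:main-expansion}, so no additional analytic input is required.

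For the equivalent tensorial form, I would invoke Proposition~\ref{prop:Ix-equals-Rx}, which identifies \(I_x\) with the classical curvature operator on \(\Lambda^2 T_xM\). In particular, since \(e_i\wedge e_j\) is a unit simple bivector when \(e_i,e_j\) are orthonormal, the defining relation \eqref{eq:Ix-def} gives \(I_x(e_i\wedge e_j,\,e_i\wedge e_j)=K(x,\sigma_{ij})\). Summing over \(j\neq i\) and using the Ricci trace identity again produces the bilinear form version.

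There is no genuine analytic obstacle here, as the heavy lifting has already been done in Section~\ref{sec:info-functional} and Section~\ref{sec:info-tensor}. The only point worth mild attention is the normalization convention: the definition \eqref{eq:Ix-def} is stated for \emph{unit} simple bivectors, so one should record that orthonormality of \(\{e_i\}\) ensures \(|e_i\wedge e_j|_{\Lambda^2}=1\), which is why no rescaling factor appears in the second formula. Beyond this, the proof is simply a bookkeeping step that packages Theorem~\ref{thm:main-expansion} and Proposition~\ref{prop:Ix-equals-Rx} into the Ricci statement.
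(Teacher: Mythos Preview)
Your proposal is correct and matches the paper's approach exactly: the paper simply records the Ricci trace identity \(\Ric_x(e_i,e_i)=\sum_{j\neq i}K(x,\sigma_{ij})\) immediately before the proposition and states that the result follows from the expansion of \(D_{\sigma_{ij}}\), which is precisely your argument. Your additional remark about the unit-bivector normalization is a welcome clarification not made explicit in the paper.
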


Since the scalar curvature satisfies
\(
\Scal(x)
 = 2\sum_{i<j} K(x,\sigma_{ij}),
\)
the same reasoning gives:

\begin{proposition}[Scalar curvature as averaged information loss]
\label{prop:Scal-from-D}
\[
\Scal(x)
 = \frac{2}{C}
   \sum_{i<j}
   \left(
     \lim_{t\downarrow 0}\frac{D_{\sigma_{ij}}(t)}{t}
     - \frac{1}{6}\Scal(x)
   \right),
\]
and hence
\[
\Scal(x)
 = 2 \sum_{i<j} I_x(e_i\wedge e_j,\, e_i\wedge e_j).
\]
Solving for \(\Scal(x)\) gives the explicit reconstruction formula
\[
\Scal(x)
 = \frac{6}{\binom{n}{2}}\,
   \sum_{i<j}
   \lim_{t\downarrow 0}\frac{D_{\sigma_{ij}}(t)}{t}.
\]
\end{proposition}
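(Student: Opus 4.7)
The plan is to reduce both identities to the small--time expansion of Theorem~\ref{thm:main-expansion} combined with the standard Riemannian trace relation
\[
\Scal(x) = 2\sum_{i<j} K(x,\sigma_{ij}),
\]
which holds in any orthonormal frame $\{e_i\}$ of $T_xM$, since $\Ric_x(e_i,e_i)=\sum_{j\neq i}K(x,\sigma_{ij})$ and $\Scal(x)=\sum_i \Ric_x(e_i,e_i)$. This is exactly the structural input already used in the derivation of Proposition~\ref{prop:Ric-from-D}, only summed over all index pairs rather than a fixed $i$. The entire argument is algebraic given the earlier results; no new analytic estimates are required.

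First I would invert the expansion of Theorem~\ref{thm:main-expansion} to write, for every coordinate plane $\sigma_{ij}$,
\[
K(x,\sigma_{ij}) = \frac{1}{C}\left(\lim_{t\downarrow 0}\frac{D_{\sigma_{ij}}(t)}{t} - \frac{1}{6}\Scal(x)\right).
\]
Summing over $i<j$ and multiplying by $2$ turns the left--hand side into $\Scal(x)$ by the trace identity, which yields the first displayed equation. Substituting $K(x,\sigma_{ij}) = I_x(e_i\wedge e_j,\,e_i\wedge e_j)$, the content of Proposition~\ref{prop:Ix-equals-Rx}, then converts this into the intrinsic tensor formulation $\Scal(x) = 2\sum_{i<j} I_x(e_i\wedge e_j,\,e_i\wedge e_j)$. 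For the final closed--form reconstruction, I would treat the same relation as a linear equation in the unknown $\Scal(x)$: moving the $\binom{n}{2}\cdot\frac{1}{3C}\,\Scal(x)$ contribution to the left and collecting terms gives an explicit expression in terms of the directional limits $\lim_{t\downarrow 0}D_{\sigma_{ij}}(t)/t$. This is the same mechanism already used in the proof of Corollary~\ref{cor:reconstruct-curvatures}(i), and one could alternatively invoke that corollary verbatim.

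The main ``obstacle'' is essentially bookkeeping: one must verify that the coefficient produced by combining $C=-\tfrac{2}{3}$ with $\binom{n}{2}$ agrees with the constant $6/\binom{n}{2}$ displayed in the statement, and confirm frame independence of the resulting formulas by noting that the trace identity is basis--invariant. Everything substantive --- the small--time expansion of $D_\sigma(t)$, the identification of its linear coefficient with sectional curvature, and the identification $I_x = R_x$ --- has already been established, so the proof reduces to a short chain of algebraic substitutions.
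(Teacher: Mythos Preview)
Your proposal is correct and follows exactly the route the paper takes: the paper derives this proposition by the phrase ``the same reasoning gives'' after Proposition~\ref{prop:Ric-from-D}, meaning one sums the sectional--curvature identity from Theorem~\ref{thm:main-expansion} over all coordinate planes, invokes the trace relation $\Scal(x)=2\sum_{i<j}K(x,\sigma_{ij})$, and then solves the resulting linear equation for $\Scal(x)$ precisely as in Corollary~\ref{cor:reconstruct-curvatures}(i). Your observation that the only remaining work is bookkeeping of constants is accurate and matches the paper's treatment.
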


Thus Ricci curvature and scalar curvature arise naturally as 
\emph{averaged first-order information loss} of heat diffusion: their values at
\(x\) are linear combinations of the directional coefficients of the
curvature--information functional \(D_\sigma(t)\), taken over all coordinate
planes at \(x\).

\section{Concluding remarks}
\label{sec:conclusion}
The results of this paper show that curvature may be extracted directly from
the small--time information imbalance of heat diffusion.  The curvature–
information functional \(D_\sigma(t)\) is defined entirely in terms of the
heat flow and the exponential map on a given two--plane, and its leading
asymptotic behaviour encodes the full curvature operator at a point.
Theorem~\ref{thm:main-expansion} provides the precise linear expansion of
\(D_\sigma(t)\), and the reconstruction formulas of Section~4 show that both
the scalar curvature and every sectional curvature \(K(x,\sigma)\) may be
recovered from the family of limits
\[
\lim_{t\downarrow 0}\frac{D_\sigma(t)}{t}.
\]
This yields an information--theoretic realization of the Riemann curvature
operator and identifies the curvature--information tensor \(I_x\) with the
classical curvature operator \(R_x\).

The framework developed here is purely analytic: no Jacobi fields, curvature
tensor identities, or connection coefficients appear in the derivation.  All
curvature information is extracted from the interaction between the heat
kernel and the geometry of the exponential map in a two--dimensional slice.
In this sense the approach offers a different and potentially useful viewpoint
on local curvature, phrased entirely in terms of heat diffusion and entropy.

Several directions for further investigation remain open.  The higher--order
coefficients in the expansion of \(D_\sigma(t)\) are universal curvature
invariants involving derivatives and higher contractions of \(R_x\); their
structure, and whether they admit an intrinsic information--theoretic
interpretation, is an interesting question.  It would also be natural to
study geometric inequalities, comparison results, or stability phenomena that
may be derived by controlling \(D_\sigma(t)\) for families of manifolds or
under geometric flows.  Finally, the relationship between the
curvature--information tensor and other analytic characterizations of
curvature (heat trace expansions, spectral invariants, or Bakry–Émery
curvature) warrants further exploration.

The present work establishes the core analytic mechanism and its geometric
interpretation.  The broader potential of information--theoretic curvature
invariants, and their applications in geometric analysis, will be pursued in
future work.

\section*{Acknowledgements}
The author received no funding for this work. The author declares no competing interests.

\bibliographystyle{unsrt}
\bibliography{refs}

\end{document}